 \def\Dj{\hBox{D\kern-.73em\raise.30ex\hBox{-} \raise-.30ex\hBox{}}}
 \def\dj{\hBox{d\kern-.33em\raise.80ex\hBox{-} \raise-.80ex\hBox{\kern-.40em}}}
\def\<{\langle}                     
\def\>{\rangle}                     
\theoremstyle{plain}
\newtheorem{theorem}{Theorem}[section]
\newtheorem{lemma}{Lemma}[section]
\newtheorem{remark}{Remark}[section]
\theoremstyle{definition}
\newtheorem{definition}{Definition}[section]
\newtheorem{rem}{Remark}[section]
\numberwithin{equation}{section}
\begin{document}
\setcounter{page}{1}


\title[Radio Number of Stacked-Book Graphs]{On Radio Number of Stacked-Book Graphs}


\author[T.C Adefokun]{Tayo Charles Adefokun$^1$ }
\address{$^1$Department of Computer and Mathematical Sciences,
\newline \indent Crawford University,
\newline \indent Nigeria}
\email{tayoadefokun@crawforduniversity.edu.ng}

\author[D.O. Ajayi]{Deborah Olayide Ajayi$^2$}
\address{$^2$Department of Mathematics,
\newline \indent University of Ibadan,
\newline \indent Ibadan,
\newline \indent Nigeria}
\email{olayide.ajayi@mail.ui.edu.ng; adelaideajayi@yahoo.com}




\keywords{Radio labeling, radio number, stacked-book graph, Cartesian product of graphs \\
\indent 2010 {\it Mathematics Subject Classification}. Primary: 05C78; 05C15}

\begin{abstract}
A Stacked-book graph $G_{m,n}$ results from the Cartesian product of a star graph $S_m$ and path $P_n$, where $m$ and $n$ are the orders of $S_m$ and $P_n$ respectively. A radio labeling problem of a simple and connected graph, $G$, involves a non-negative integer function $f:V(G)\rightarrow \mathbb Z^+$ on the vertex set $V(G)$ of G, such that for all $u,v \in V(G)$, $|f(u)-f(v)| \geq \textmd{diam}(G)+1-d(u,v)$, where $\textmd {diam}(G)$ is the diameter of $G$ and $d(u,v)$ is the shortest distance between $u$ and $v$. Suppose that $f_{min}$ and $f_{max}$ are the respective least and largest values of $f$ on $V(G)$, then, span$f$, the absolute difference of $f_{min}$ and $f_{max}$, is the span of $f$ while the radio number $rn(G)$ of $G$ is the least value of span$f$ over all the possible radio labels on $V(G)$. In this paper, we obtain the radio number for the stacked-book graph $G_{m,n}$ where $m \geq 4$ and $n$ is even, and obtain bounds for $m=3$ which improves existing upper and lower bounds for $G_{m,n}$ where $m=3$.
\end{abstract}


\maketitle


\section{Introduction}
The graph $G$ considered in this paper is simple and undirected. The vertex and edge sets of $G$ are $V(G)$ and $E(G)$. For $e=uv \in E(G)$, $e$ connects two vertices $u$ and $v$ while $d(u,v)$ is the distance between $u,v$ and $\textmd{diam}(G)$ is  the diameter of $G$. Radio number labeling problem, which is mostly applied in frequency assignment for signal transmission, where it mitigates the problems of signal interference. It was first suggested in 1980 by Hale\cite{Hale}.

Let $f$ be a non negative integer function on $V(G)$ such that the radio labeling condition, $|f(u)-f(v)| \geq \textmd{diam} {G}+1-d(u,v)$ is satisfied for every pair $u,v \in V(G)$. The span of $f$, span$f$, is the difference between $f_{min}$ and $f_{max}$, the minimum and the maximum radio label on $G$ respectively. Thus the smallest possible value of span$f$ is the radio number, $rn(G)$, of $G$. The radio labeling condition guarantees that every vertex on $G$ has unique radio label. Therefore, $rn(G) \geq |V(G)|-1$ is trivially true. However, establishing the radio number of graphs has proved to be quite tedious. Even so, such numbers have been completely determined for some graphs. Liu and Zhu \cite{LZ1} showed that  for path, $P_n$, $n \geq 3$,
\begin{center}

$rn(P_n) = \left\{
\begin{array}{ll}
            2k(k-1)+1 &  \mbox{if} \;\; n=2k;   \\
             2k^2+2 &  \mbox{if} \; \; n=2k+1.
					
\end{array}
\right.$

\end{center}
 This improves results in  \cite{CEHZ1} and \cite{CEZ1} by Chatrand, et. al. where the upper and lower bounds for the same class of graph are obtained. Furthermore, Liu and Xie, \cite{LX2}, found the radio number for the square of a path, $P^2_n$ as:

\begin{center}

$rn(P^2_n) = \left\{
\begin{array}{ll}
            k^2+2 &  \mbox{if} \;\; n \equiv 1 (\rm{mod} \; 4), n \geq 9;   \\
             k^2+1 &  \mbox{if} \; \; \rm{otherwise}.
					
\end{array}
\right.$

\end{center}
Similar results are obtained in \cite{LX1} for square of cycles. Jiang \cite{J1} completely solved the radio number problem for the grid graph $(P_m \Box P_n)$, where for $m,n > 2$, it is noted that $rn(P_m \Box P_n)=\frac{mn^2+nm^2-n}{2}-mn-m+2$, for $m$-odd and $n$ even.
Saha and Panigrahi \cite{SP1} and Ajayi and Adefokun  \cite{AA1} obtained results on the radio numbers of Cartesian products of two cycles (toroidal grid) and of path and star graph (stacked-book graph) respectively. In the case of stacked-book graph $G=S_n \Box P_m$, $rn(G) \leq n^2m+1$, which the authors noted is not tight. Recent results on radio number include those on middle graph of path \cite{BD1}, trees, \cite{BD2} and edge-joint graphs \cite{NSS1}.

In this paper, for even positive integer $n$, we consider the stacked-book graph $G_{m,n}$ and derive the $rn(G_{m,n})$ for the case $m \geq 4$. Furthermore, new lower and upper bounds of the number are obtained for $m=3$, which improve similar results in \cite{AA1}.


\section{Preliminaries}
Let $S_m$ be a star of order $m \geq 3$ and for each vertex $v_i \in V(S_m)$, $2 \leq i \leq m$, $v_i$ is adjacent to $v_1$, the center vertex of $S_m$. Also, let $P_n$ be a path such that $|V(P_n)|=n$. The Graph $G_{m,n}=S_m \Box P_n$, is obtained by the Cartesian product of $S_m$ and $P_n$. The vertex set $V(G_{m,n})$ is the Cartesian product $V(S_m) \times V(P_n)$, such that for any $u_iv_j \in V(G_{m,n})$, then, $u_i \in V(S_m)$ and $v_j \in E(P_n)$. For $E(G_{m,n})$, $u_iv_j \; u_kv_l$ is contained in $E(G_{m,n})$ for $u_iv_j$, $u_kv_l \in V(G_{m,n})$, then either $u_i=u_k$ and $v_jv_l \in E(P_m)$ or $u_iu_k \in E(S_m)$ and $v_j = v_l$. Geometrically, $V(G_{m,n})$ contains $n$ number of $S_m$ stars, namely $S_{m(1)}, S_{m(2)}, \cdots, S_{m(n)}$, such that for every pair $v_i \in S_{m(i)}$ and $v_{i+1} \in S_{m(i+1)}$, $v_iv_{i+1} \in E(G_{m,n})$. These are, in fact, the only type of edges on $G_{m,n}$ apart from those on its $S_m$ stars. This geometry fetched $G_{m,n}$ the name \emph{stacked-book} graph.
\begin{remark}\label{rem01}It is easy to see that $diam(G_{m,n})=n+1$, being the number of edges from $u_iv_1 \rightarrow u_1v_1 \rightarrow u_1v_2 \rightarrow \cdots \rightarrow u_1v_n \rightarrow u_jv_n$, where $i \neq j$.
\end{remark}
\begin{rem} For convenience, we write $u_iv_j$ as $u_{i,j}$ in certain cases and $u_{i,j}u_{k,l}$ is the edge induced by $u_iv_j$ and $u_kv_l$.
\end{rem}

\begin{definition}
Let $G_{m,n}=S_m \Box P_n$. The vertex set $V_{(i)} \subset V(G_{m,n}) $ is the set of vertices on star $S_{m(i)}$, defined by the set $\left\{u_1v_i, u_2v_i, \cdots, u_mv_i\right\}$.
\end{definition}

We introduce the following definition:
\begin{definition}
Let $G_{m,n}=S_m \Box P_n$. Then, the pair $\left\{S_{m(i)},S_{m(i+\frac{n}{2})}\right\}$ is a subgraph $G(i) \subseteq G_{m,n}$ induced by $V_{(i)}$ and $V_{(i+ \frac{n}{2})}$.
\end{definition}

\begin{remark}
The maximum number of $G(i)$ subgraph in a $G_{m,n}$ graph, $n$ even, is $\frac{n}{2}$ and the $diam(G(i))=\frac{n}{2}+2$.
\end{remark}

\begin{remark}\label{reme1}
Let $\left\{V_{(i)}, V_{(i+\frac{n}{2})}\right\}$ induce $G(i)$, such that $V_{(i)}=\left\{u_{1,i},u_{2,i}, \cdots, u_{m,i}\right\}$ and $V_{(i+\frac{n}{2})}=\left\{u_{1,\frac{i+n}{2}}, u_{2,\frac{i+n}{2}}, \cdots, u_{m,\frac{i+n}{2}} \right\}$. Then, for $u\in V_{(i)}$, $v\in V_{(i+\frac{n}{2})}$ and $d(u,v)=p$, where $p \in \left\{\frac{n}{2}, \frac{n}{2}+1, \frac{n}{2}+2 \right\}$ and for $u_{k,i}, v_{t, i+\frac{n}{2}}$,

\begin{center}$p = \left\{
\begin{array}{ll}
            \frac{n}{2} &  \mbox{if} \;\; k=t;   \\
            \frac{n}{2}+1 &  \mbox{if} \; \; t=1, k\neq t; \\
						\frac{n}{2}+2 &  \mbox{if} \; \; t\neq 1, k \neq 1, k\neq t.
\end{array}
\right.$
\end{center}

\end{remark}

 \section{results}
In this section, we estimate the radio number of stacked-book graphs and obtain the exact radio number for $G_{m,n}$, for $m \geq 4$, $n$ even.
\begin{lemma} \label{lem0}
Let $S_m$ be a star on $G_{m,n}$ and $f$, a radio label function on $G_{m,n}$. Then span$f$ on $S_m$ is $n(m-1)+1$.
\end{lemma}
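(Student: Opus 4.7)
The plan is to combine the radio-labeling inequality with the very restrictive local geometry of a single star $S_m = S_{m(i)}$ sitting inside $G_{m,n}$. Inside such a star only two distances occur: the center $u_{1,i}$ is at distance $1$ from every leaf $u_{k,i}$ ($k\neq 1$), and any two leaves are at distance $2$ (via the center). Since Remark \ref{rem01} gives $\textmd{diam}(G_{m,n}) = n+1$, the radio condition specializes on $S_{m(i)}$ to
\begin{equation*}
|f(u)-f(v)| \geq n+1 \text{ if } \{u,v\} \text{ is a center--leaf pair},\qquad |f(u)-f(v)| \geq n \text{ if } \{u,v\} \text{ is a leaf--leaf pair}.
\end{equation*}

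Next I would sort the $m$ labels on $S_m$ as $a_1 < a_2 < \cdots < a_m$ and write the span on $S_m$ as the telescoping sum $a_m - a_1 = \sum_{i=1}^{m-1}(a_{i+1}-a_i)$. The key observation is that the center vertex participates in exactly two consecutive gaps if its label lies in an interior position of the sorted order, and in exactly one consecutive gap if its label is either the smallest or the largest among the $m$ labels; every other consecutive gap is a leaf--leaf gap. Using the bounds above, a simple case split gives
\[
\text{span}_{S_m} f \;\geq\; \begin{cases} 2(n+1) + (m-3)n = (m-1)n+2, & \text{center in interior position},\\ (n+1) + (m-2)n = (m-1)n+1, & \text{center at an extreme position},\end{cases}
\]
so the infimum of the lower bound is $(m-1)n+1$, achieved only when the center's label is the minimum or the maximum among the $m$ labels of $S_{m(i)}$.

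Finally I would confirm achievability by exhibiting a labeling of $S_{m(i)}$ alone that meets the bound: assign the center any value $a$, then assign the $m-1$ leaves the values $a+(n+1), a+(n+1)+n, a+(n+1)+2n, \dots, a+(n+1)+(m-2)n$; this satisfies every in-star constraint with equality on consecutive leaves and gives span exactly $(m-1)n+1$. The main (and essentially only) delicate point is making sure the case analysis on the center's position is exhaustive and that no pair is undercounted; the calculation itself is routine once the two possible gap sizes are fixed by the radio inequality.
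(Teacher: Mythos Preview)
Your proof is correct and follows essentially the same approach as the paper: use the radio condition on center--leaf pairs (gap $\geq n+1$) and leaf--leaf pairs (gap $\geq n$) inside the star and add up the consecutive gaps. Your version is in fact more complete than the paper's, which tacitly assumes the center carries the minimal label and only argues one direction; your sorting-and-telescoping case split handles an arbitrary labeling, explicitly verifies achievability, and recovers the content of Remark~\ref{rem0} as a byproduct.
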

\begin{proof}
Let the center vertex of $S_m$ be $v_1$ and let $f(v_1)$ be the radio label on $v_1$. There exists some $v_2 \in V(S_m)$ such that $d(v_1,v_2)=1$. Therefore, by the definition, $f(v_2) \geq f(v_1)+n+1$. Suppose that $k \notin \left\{1,2\right\}$. For $v_k$, $d(v_2,v_k)=2$, for all $v_k \in V(S_m)$. Thus, without loss of generality, suppose that $v_m$ is the last vertex on $V(S_m)$, then $f(v_m) \geq f(v_0)+(n+1)+n(m-2)$ and the claim follows.
\end{proof}
\begin{rem} \label{rem0} It is easy to confirm that given a star $S_m$ with center vertex $v_1$, if for a positive integer $\alpha$, $rn(S_m)=\alpha$, then either $f(v_1)$ is $f_{min}$ or $f_{max}$.
\end{rem}
 Now we establish lower bound for $G(i)$.

\begin{lemma} \label{lem01}
Let $G(i)$ be a subgraph of $G_{m,n}$ and let $f$ be the radio label on $V(G_{m,n})$. Then, $rn(G(i)) \geq f(v_1)+mn-\frac{n}{2}+2$, where $v_1$ is the center vertex of $S_{m(i+\frac{n}{2})}$.
\end{lemma}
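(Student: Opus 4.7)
The plan is to enumerate the $2m$ vertices of $V(G(i))$ in strictly increasing order of $f$-label as $w_0, w_1, \dots, w_{2m-1}$ and to telescope the consecutive radio-labeling inequalities. Since the stated inequality would otherwise be vacuous, I read it under the natural assumption that $f(v_1)$ is the smallest $f$-label on $V(G(i))$, so that $w_0 = v_1$. Using $\operatorname{diam}(G_{m,n}) = n+1$ from Remark~\ref{rem01}, the radio condition applied to each consecutive pair gives $f(w_{j+1}) - f(w_j) \geq (n+2) - d(w_j, w_{j+1})$, and telescoping yields
\[
f(w_{2m-1}) - f(v_1) \;\geq\; (2m-1)(n+2) \;-\; \sum_{j=0}^{2m-2} d(w_j, w_{j+1}).
\]
It therefore suffices to prove that $\sum_{j=0}^{2m-2} d(w_j, w_{j+1}) \leq (2m-1)\bigl(\tfrac{n}{2}+2\bigr) - 2$.

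For this distance bound I appeal to Remark~\ref{reme1}: every pairwise distance inside $G(i)$ lies in $\{1, 2, \tfrac{n}{2}, \tfrac{n}{2}+1, \tfrac{n}{2}+2\}$, and the largest value $\tfrac{n}{2}+2$ is attained only between two non-center leaves drawn from different stars with distinct indices. Consequently, any consecutive transition $w_j \to w_{j+1}$ having a center vertex as an endpoint satisfies $d(w_j, w_{j+1}) \leq \tfrac{n}{2}+1$. Since $V(G(i))$ contains exactly two centers, namely $v_1$ and the center of $S_{m(i)}$, a short positional check on the ordering $w_0, \dots, w_{2m-1}$ shows that at least two of the $2m-1$ consecutive transitions must be center-incident. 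Bounding those two transitions by $\tfrac{n}{2}+1$ and the remaining $2m-3$ by $\tfrac{n}{2}+2$ delivers the required upper bound on the sum of distances.

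Substituting into the telescoped inequality then produces
\[
f(w_{2m-1}) - f(v_1) \;\geq\; (2m-1)(n+2) - (2m-1)\bigl(\tfrac{n}{2}+2\bigr) + 2 \;=\; (2m-1)\tfrac{n}{2} + 2 \;=\; mn - \tfrac{n}{2} + 2,
\]
which rearranges to the claimed bound once $rn(G(i))$ is read as the largest $f$-label attained on $V(G(i))$. The main obstacle I anticipate is precisely the combinatorial step counting center-incident transitions: the claim is intuitively clear, but needs a careful case split on whether the two centers of $G(i)$ occupy extreme positions, lie adjacent to one another, or both sit in the interior of the ordering. Everything else reduces to routine manipulation of the radio inequality together with the distance table from Remark~\ref{reme1}.
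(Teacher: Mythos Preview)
Your telescoping argument is correct. The combinatorial step you flag is routine once $v_1=w_0$ is fixed: the edge $w_0w_1$ is center-incident, and the second center $u_1=w_j$ (some $j\geq 1$) contributes a further center-incident edge distinct from $w_0w_1$, namely $w_{j-1}w_j$ when $j\geq 2$, or $w_1w_2$ when $j=1$ (which exists since $2m\geq 4$). So no elaborate case split is needed.

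This is a genuinely different route from the paper's. The paper first invokes Lemma~\ref{lem0} to force the span of $f$ on the single star $S_{m(j)}$, $j=i+\tfrac{n}{2}$, to be at least $n(m-1)+1$, so that its top leaf satisfies $f(v_m)\geq f(v_1)+mn-n+1$; it then applies the radio inequality once more from $v_m$ to the center $u_1$ of $S_{m(i)}$ (distance $\tfrac{n}{2}+1$) to gain the remaining $\tfrac{n}{2}+1$, and closes with an ad~hoc Claim that labeling $u_1$ last is optimal. Your approach bypasses both the auxiliary lemma and that Claim by bounding the total consecutive-distance sum over the full $f$-ordering of all $2m$ vertices at once; this is the standard lower-bound device for radio numbers and is more rigorous, while the paper's argument is quicker to state but rests on an ordering assumption that is only loosely justified there.
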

\begin{proof}
Let $S_{m(i)}$ and $S_{m(j)}$ be the stars on $G(i) \subset G_{m,n}$, where $j=i+\frac{n}{2}$. By Lemma \ref{lem0}, $f(v_m)=f(v_1)+mn-n+1$, with $f(v_m)=\max\left\{f(v_t): v_t \in V(S_m(j)) \right\}$, and $v_1$ the center of $S_{m(j)}$. Now, let $u_1$ be the center vertex of $S_{m(i)}$. It is clear that $d(u_1,v_1)=\frac{n+2}{2}$. Thus, $f(u_1) \geq f(v_1)+mn-n+1+\frac{n+2}{2} = f(v_1)+mn-\frac{n}{2}+2$. {\bf Claim:} For optimal radio labeling of $G(i)$, maximum label on $S_{m(i)}$ is at least $f(u_1)$.
{\bf Reason:} Consider some $u_m \in V(S_m)$, such that $m \neq 1$ and $d(u_m,v_m)=\frac{n}{2}+2$. Then $f(u_m)=f(v_1)+mn-\frac{n}{2}+1$. By Lemma \ref{lem0}, the span$f$ of $f$ for a star $S_m$ is $mn-n+1$. Now, $f(u_m)-(mn-n+1)=f(v_i)+\frac{n}{2}$. Thus, by Remark \ref{lem0}, $f(u_1)=f(v_1+\frac{n}{2})$. This is a contradiction, considering that $d(u_1,v_1)=\frac{n}{2}$.

\end{proof}
\begin{lemma}
Let $G^+(i) \subset G_{m,n}$ be $G(i) \cup w_1$, where $w_1$ is the center vertex of $S_{m(j+1)}$ and let $f$ be a radio labeling on $G_{m,n}$, where $n$ is even. Then, the span$f$ of $f$ on $G^+(i) \geq mn+3$
\end{lemma}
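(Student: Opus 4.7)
The plan is to exploit an \emph{augmented star} observation: the set $A := V(S_{m(j)}) \cup \{w_1\}$, viewed inside $G_{m,n}$, inherits exactly the distance structure of a star $S_{m+1}$ centered at $v_1$. Indeed $d(v_1,w_1)=1$ and $d(v_1,v_t)=1$ for $t\geq 2$, while any two non-center vertices of $A$ (including $w_1$) sit at distance $2$ via $v_1$. Running the argument of Lemma \ref{lem0} with $m$ replaced by $m+1$ therefore yields span$f|_A \geq mn+1$, and by Remark \ref{rem0} applied to this augmented star we may assume (after replacing $f$ by $C-f$ if necessary) that $f(v_1) = \min_{z\in A} f(z)$; there is then some $z^*\in A\setminus\{v_1\}$ with $f(z^*)\geq f(v_1)+mn+1$.

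Next I read off the distances between $V(S_{m(i)})$ and $A$: $d(u_1,v_1)=n/2$ giving $|f(u_1)-f(v_1)|\geq n/2+2$; $d(u_1,z)=n/2+1$ for every $z\in A\setminus\{v_1\}$ (including $w_1$) giving $|f(u_1)-f(z)|\geq n/2+1$; and for each $t\geq 2$ the relations $d(u_t,v_1)=n/2+1$ and $d(u_t,z)=n/2+2$ for $z\in A\setminus\{v_1\}$. By Remark \ref{rem0} and Lemma \ref{lem0} applied to $S_{m(i)}$, the center $u_1$ is either the minimum or maximum of $f$ on $V(S_{m(i)})$, and the span of $f$ on $S_{m(i)}$ is at least $mn-n+1$.

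The argument now splits according to the sign of $f(u_1)-f(v_1)$. If $f(u_1)<f(v_1)$ then $f(u_1)\leq f(v_1)-n/2-2$, so span$f|_{G^+(i)}\geq f(z^*)-f(u_1)\geq mn+n/2+3$. Otherwise $f(u_1)\geq f(v_1)+n/2+2$, and I mimic the contradiction from the proof of Lemma \ref{lem01}. If $u_1$ is the maximum of $S_{m(i)}$, then a minimum leaf $u_{t_*}$ of $S_{m(i)}$ either has $f(u_{t_*})<f(v_1)$ (which, combined with $|f(u_{t_*})-f(v_1)|\geq n/2+1$, already forces span$\geq mn+n/2+2$) or $f(u_{t_*})\geq f(v_1)+n/2+1$, in which case the star-span bound pushes $f(u_1)\geq f(v_1)+mn-n/2+2$, after which $|f(u_1)-f(z^*)|\geq n/2+1$ drives either $f(u_1)$ or $f(z^*)$ up to $\geq f(v_1)+mn+3$. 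If instead $u_1$ is the minimum of $S_{m(i)}$, then some leaf $u_t$ satisfies $f(u_t)\geq f(u_1)+mn-n+1\geq f(v_1)+mn-n/2+3$, and $|f(u_t)-f(z^*)|\geq n/2$ yields the required $mn+3$ (using $n\geq 4$, which is implicit for $G^+(i)$ to be well-defined). The symmetric situation in which $f(v_1)=\max_A f$ is handled by applying the above to $C-f$. The main obstacle is keeping the bookkeeping straight across these subcases: in each one the additional $n/2+1$ beyond Lemma \ref{lem01}'s bound must be captured, and this is precisely the extra reach supplied by $w_1$ through the augmented-star upgrade from $S_m$ to $S_{m+1}$.
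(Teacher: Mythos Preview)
Your augmented-star idea is a genuinely different route from the paper's, which simply chains the conclusion of Lemma~\ref{lem01} (namely $f(u_1)\ge f(v_1)+mn-\tfrac{n}{2}+2$, with $v_1$ taken to be the minimum on $G(i)$) together with one application of the radio inequality for the pair $u_1,w_1$ at distance $\tfrac{n}{2}+1$. Embedding $w_1$ into a copy of $S_{m+1}$ centred at $v_1$ is elegant and immediately yields $\mathrm{span}\,f|_A\ge mn+1$, after which you try to pick up the remaining $+2$ from $S_{m(i)}$.

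The gap is in your two appeals to Remark~\ref{rem0}. That remark concerns labelings that \emph{achieve} $rn(S_m)$; it does not assert that the center is extremal under an arbitrary radio labeling $f$. For instance, on $A$ one may place a leaf at $0$, the center $v_1$ at $n+1$, another leaf at $2n+2$, and the remaining leaves at $2n+2+kn$ for $k=1,\dots,m-2$; this satisfies all radio constraints on $A$, yet $f(v_1)$ is strictly interior, and no substitution $f\mapsto C-f$ repairs that. The same objection applies to your claim that $u_1$ is extremal on $S_{m(i)}$. With those reductions gone your case split is incomplete: in the ``center interior'' configuration the leaves of $A$ alone give only $\mathrm{span}\ge mn+2$, and a separate argument using $V(S_{m(i)})$ is still required for the last unit. (To be fair, the paper's own proof makes the parallel tacit assumptions that $f(v_1)=\min_{G(i)}f$ and $f(w_1)>f(u_1)$, so this is a weakness your attempt inherits from the source rather than one it introduces.)
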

\begin{proof}
Let $u_1$ be the center vertex of $S_{m(i)}$. It can be verified that $d(u_1,w_1)=\frac{n}{2}$. By the proof of Lemma \ref{lem01}, $f(u_1) \geq f(v_1)+mn-\frac{n}{2}+2$, where $v_1$ is the center vertex of $S_{m(j)}$. Thus by definition, $f(w_1) \geq f(v_1)+mn-\frac{n}{2}+2 + \frac{n+2}{2}=f(v_1)+mn+3$. Since $f(v_1)$ is the minimum label on $G(i)$, the result follows.
\end{proof}
Now we present the lower bound for stacked-book graph $G_{m,n}$, where $n$ is an even integer and $m \geq 3$.
\begin{theorem}\label{thm11}
Let $G=G_{m,n}$ be a stacked-book graph with $m \geq 3$ and $n$ an even integer. Furthermore, let $f$ be the radio labeling on $G$. Then, $rn(G) \geq \frac{mn^2}{2}+n-1$.
\end{theorem}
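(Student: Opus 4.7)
The plan is to chain the bound from the preceding lemma on $G^+(i)$ along the sequence of structural subgraphs $G(1), G(2), \ldots, G(n/2)$ that partition $V(G_{m,n})$, and close with Lemma \ref{lem01}. After shifting $f$ so that $\min_{V(G)} f = 0$, I first normalize: using the symmetry of $G_{m,n}$ under reversing the path $P_n$ together with Remark \ref{rem0} (each star's center is either its minimum or its maximum label), I reduce to the configuration in which the global minimum is attained at the center of $S_{m(1+n/2)}$. This is exactly the hypothesis needed for Lemma \ref{lem01} and for the preceding $G^+$ lemma to apply directly to $G(1)$.

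I then apply the $G^+$ lemma to $G(1)$: the center of $S_{m(n/2+2)}$ carries label at least $mn+3$. The crucial inductive step is to argue that this center actually realizes $f_{\min}(G(2))$; this will be done by observing that any candidate vertex in $G(2)$ with smaller label would violate either Lemma \ref{lem0} on the span within its star, or the radio condition against vertices of $G(1)$ at distances classified in Remark \ref{reme1}. Iterating yields
\[
f_{\min}(G(i+1)) \geq f_{\min}(G(i)) + mn + 3 \quad \text{for } i = 1, \ldots, n/2 - 1,
\]
so that after $n/2-1$ steps one obtains $f_{\min}(G(n/2)) \geq (n/2-1)(mn+3)$.

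To close, Lemma \ref{lem01} applied inside $G(n/2)$ gives $f_{\max}(G(n/2)) \geq f_{\min}(G(n/2)) + mn - n/2 + 2$, and therefore
\[
\mathrm{span}(f) \geq (n/2-1)(mn+3) + (mn - n/2 + 2) = \frac{mn^2}{2} + n - 1,
\]
as required. The main obstacle will be the inductive identification of $f_{\min}(G(i+1))$ with the center produced by the preceding application of the $G^+$ lemma: this requires a short case analysis based on whether the extreme label of each star in $G(i+1)$ sits at its center or at a leaf (the dichotomy of Remark \ref{rem0}), together with the distance classification of Remark \ref{reme1}, to rule out configurations in which the minimum of $G(i+1)$ lies at an unexpected vertex and breaks the chain. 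The arithmetic after that step is just the telescoping computation displayed above; the delicate work is all in propagating the "center realizes the minimum" assertion down the chain.
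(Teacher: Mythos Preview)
Your chaining strategy is exactly the paper's: apply the $G^+(i)$ bound $\tfrac{n}{2}-1$ times to step from $G(1)$ to $G(n/2)$, then close with Lemma~\ref{lem01}, and telescope to get $(\tfrac{n}{2}-1)(mn+3)+(mn-\tfrac{n}{2}+2)=\tfrac{mn^2}{2}+n-1$. The arithmetic and the decomposition into the pairs $\{S_{m(i)},S_{m(i+n/2)}\}$ match the paper line for line.

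The gap is in your normalization. Path reversal together with Remark~\ref{rem0} does \emph{not} let you assume the global minimum of an arbitrary radio labeling sits at the center of $S_{m(1+n/2)}$. Remark~\ref{rem0} speaks only about a labeling realizing $rn(S_m)$ on a single star; for a generic radio labeling of $G_{m,n}$ the center of a given star need be neither its star-minimum nor its star-maximum (e.g.\ on $S_3$ one may have leaf, center, leaf at values $0,\,n{+}1,\,2n{+}2$). And even if the global minimum did land on some center $u_{1,j}$, reversing $P_n$ only swaps column $j$ with column $n{+}1{-}j$; it cannot move it to column $1+\tfrac{n}{2}$ in general. The same objection applies to your inductive step: nothing you cite forces $f_{\min}(G(i+1))$ to occur at the center of $S_{m(i+1+n/2)}$, so the hypothesis needed to re-invoke Lemma~\ref{lem01} and the $G^+$ lemma on $G(i+1)$ is not available. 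In fairness, the paper's own proof simply writes ``if $f(v_1)=0$'' and never argues why one may assume this for an arbitrary $f$; you have correctly identified this as the crux, but the reduction you propose does not close it.
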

\begin{proof}
From the definition of $G(i)$, graph $G_{m,n}$ contains $\frac{n}{2}$ number of $G(i)$  subgraphs. Likewise, it can be seen that $G_{m,n}$ contains $\frac{n-1}{2}$ number of $G^+(i)$ subgraphs. Now, let $G(\frac{n}{2})$, induced by $S_{m(\frac{n}{2})}$ and $S_{m(n)}$ be the last $G(i)$ subgraphs on $G_{m,n}$ and $G^+(1), G^+(2), \cdots, G^+(\frac{n-1}{2})$ be the $\frac{n-1}{2}$ number of $G^+(i)$ graphs. By the earlier result, if $f(v_1)=0$, then $rn(G_{m,n}) \geq \left(\frac{n-1}{2}\right)\left(mn+3\right)+mn-\frac{n}{2}+2=\frac{mn^2}{2}+n-1$.
\end{proof}

In what follows, we examine the upper bound of the stacked-book graph $G_{m,n}$.

\begin{lemma} \label{lem1}
Let $G(i)$ be a subgraph of $G_{m,n}$ induced by $\left\{V_{(i)},V_{(i+\frac{n}{2})}\right\}$. Then for any pair $v \in V_{(i)}$ and $u \in V_{(i+\frac{n}{2})}$, such that $d(u,v) \geq \frac{n}{2}+1$, $|f(v)-f(u)| \geq \frac{n}{2}$.
\end{lemma}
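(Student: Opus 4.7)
The plan is that this lemma should follow almost immediately from the radio labeling condition together with the distance computation in Remark \ref{reme1}, so no new ideas are needed. First I would recall from Remark \ref{rem01} that $\mathrm{diam}(G_{m,n}) = n+1$, which turns the radio labeling condition into
\[
|f(u) - f(v)| \;\geq\; (n+1) + 1 - d(u,v) \;=\; n + 2 - d(u,v)
\]
for every pair of vertices $u,v \in V(G_{m,n})$.

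Next I would invoke Remark \ref{reme1}, which enumerates the possible distances between a vertex $u_{k,i} \in V_{(i)}$ and a vertex $u_{t,\,i+\frac{n}{2}} \in V_{(i+\frac{n}{2})}$: these distances lie in $\{\tfrac{n}{2},\,\tfrac{n}{2}+1,\,\tfrac{n}{2}+2\}$. The standing hypothesis $d(u,v) \geq \tfrac{n}{2}+1$ therefore restricts attention to the two cases $d(u,v) = \tfrac{n}{2}+1$ and $d(u,v) = \tfrac{n}{2}+2$.

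Finally I would plug each of these into the displayed inequality. When $d(u,v) = \tfrac{n}{2}+1$, we get $|f(u)-f(v)| \geq n+2-(\tfrac{n}{2}+1) = \tfrac{n}{2}+1 \geq \tfrac{n}{2}$; and when $d(u,v) = \tfrac{n}{2}+2$, we get $|f(u)-f(v)| \geq n+2-(\tfrac{n}{2}+2) = \tfrac{n}{2}$. Both cases yield the claimed bound, which completes the argument. There is no real obstacle here: the lemma is essentially a bookkeeping step isolating the consequence of the radio condition on the ``far'' pairs across $G(i)$, which will presumably be combined with Lemma \ref{lem0} and the bounds already obtained to build the upper-bound construction in the sequel.
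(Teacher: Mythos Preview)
Your argument is correct and follows essentially the same route as the paper: apply the radio labeling condition with $\mathrm{diam}(G_{m,n})=n+1$ and use the distance classification of Remark~\ref{reme1} to reduce to the relevant distances. If anything, your version is slightly more thorough, since you treat both cases $d(u,v)=\tfrac{n}{2}+1$ and $d(u,v)=\tfrac{n}{2}+2$ explicitly, whereas the paper focuses on the extremal case $d(u,v)=\tfrac{n}{2}+2$ (neither vertex a center), which already yields the tightest bound $\tfrac{n}{2}$.
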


\begin{proof} Let $u=u_{k,i} \in V_{(i)}$ and $v=u_{t,i+\frac{n}{2}} \in V_{(i+\frac{n}{2})}$. Since $d(u,v)>\frac{n}{2}$, then by Remark \ref{reme1}, $k \neq t$. Suppose that neither $u$ nor $v$ is the center vertex of their respective stars $S_{m(i)}$ and $S_{m(i+\frac{n}{2})}$. Then, $d(u,v)=diam(G(i))$. Now, let the radio label on $u$ and $v$ be $f(u)$ and $f(v)$ respectively. Suppose, without loss of generality, that $f(v) > f(u)$. Then $f(v) \geq f(u)+diam(G_{m,n})+1-diam(G(i))$, which implies that
\begin{eqnarray}
f(v) &\geq& f(u)+\frac{n}{2}. \nonumber
\end{eqnarray}
This implies that $f(v)-f(u) \geq \frac{n}{2}$. Similarly, if $f(u) \geq f(v)$, then $f(u)-f(v) \geq \frac{n}{2}$ and thus, the claim follows.
\end{proof}
The following remarks can be confirmed by applying similar methods as in proof of Lemma \ref{lem1}.
\begin{remark}
Suppose that either of $u,v$ in Lemma \ref{lem1}, say $u$, is such that for any $u' \in V_{(i)}$, $uu'\in E(S_{m(i)})$. Then $d(u,u')=\frac{n}{2}+1$ and $|f(u)-f(v)| \geq \frac{n}{2}+1$.
\end{remark}
\begin{remark}
Let $u,u' \in V_{(i)}$. If $d(u,u')=1$, then $|f(u)-f(u')| \geq n+1$ and $|f(u)-f(u')| \geq n$ for $d(u,u')=2$.
\end{remark}

\begin{theorem}\label{thm1}
Let $m > 3$ be odd and $G(i) \subseteq G_{m,n}$, be induced by $\left\{V_{(i)},V_{(i+\frac{n}{2})}\right\}$. then, $rn(G(i)) \leq f(v_1)+mn- \frac{n}{2}+2$, where $v_1$ is the center star $S_{m(1+\frac{n}{2})}$.

\end{theorem}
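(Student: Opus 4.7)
The plan is to exhibit an explicit radio labeling of $G(i)$ with maximum value $f(v_1)+mn-\frac{n}{2}+2$ and minimum value $f(v_1)$, then verify the radio condition by a case analysis. Normalize $f(v_1)=0$ where $v_1=u_{1,i+n/2}$.

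Use the Lemma \ref{lem0} saturating pattern on the second star,
\[
f(u_{1,i+n/2})=0, \qquad f(u_{k,i+n/2})=(k-1)n+1 \quad (k=2,\ldots,m).
\]
On the first star set $f(u_{1,i})=mn-\frac{n}{2}+2$, and place the $m-1$ leaves in the interleaved slots $\{jn+\frac{n}{2}+1:j=0,\ldots,m-2\}$ by the cyclic rule
\[
f(u_{2,i})=(m-2)n+\tfrac{n}{2}+1, \qquad f(u_{k,i})=(k-3)n+\tfrac{n}{2}+1 \quad (k=3,\ldots,m).
\]
All $2m$ labels are distinct and lie in $[0,mn-\frac{n}{2}+2]$, so the span on $G(i)$ is as claimed provided the radio condition holds.

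The verification splits into three parts. Radio conditions inside $S_{m(i+n/2)}$ are immediate from Lemma \ref{lem0}. Inside $S_{m(i)}$ the leaves form an arithmetic progression of step $n$ (so every leaf pair meets the $d=2$ bound $\geq n$), and the central label $mn-\frac{n}{2}+2$ sits exactly $n+1$ above the largest leaf and $mn-n+1$ above the smallest (so it meets the $d=1$ bound $\geq n+1$). For cross-star pairs, Remark \ref{reme1} distinguishes three distances. When $d=\frac{n}{2}$ (the two centers, or same-index leaf pairs) the cyclic shift yields a gap of at least $\frac{3n}{2}\geq\frac{n}{2}+2$. When $d=\frac{n}{2}+1$ (a center opposite a leaf in the other star) a short calculation gives a gap of at least $\frac{n}{2}+1$, tight at pairs such as $(u_{1,i},u_{m,i+n/2})$ and $(u_{1,i+n/2},u_{3,i})$. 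When $d=\frac{n}{2}+2$ (different-index leaves) the gap has the form $|(\sigma(j)-k)n+\frac{n}{2}|$, which is at least $\frac{n}{2}$ for every integer value of $\sigma(j)-k$.

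The main obstacle I expect is choosing the leaf permutation so that the same-index cross-star pairs meet the bound $\geq\frac{n}{2}+2$. A naive translate $f(u_{k,i})=f(u_{k,i+n/2})+c$ collapses that gap to $\frac{n}{2}$, as does any permutation with $\sigma(k)=k$ or $\sigma(k)=k-1$; only shifts of size at least $2$ (in either direction) give the required $\frac{3n}{2}$. The cyclic shift above is the simplest such choice, and the hypothesis $m>3$ is what ensures enough leaves are present for this permutation to exist on $\{1,\ldots,m-1\}$ while keeping all shifted labels inside the tight window $[0,mn-\frac{n}{2}+2]$.
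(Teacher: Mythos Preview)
Your construction is correct and achieves the same span, but it is \emph{not} the labeling the paper uses. The paper builds its labeling sequentially: starting from $\beta_1$ it zigzags between the two stars in the order $\beta_1,\alpha_2,\beta_3,\alpha_4,\ldots,\beta_m,\alpha_3,\beta_2,\alpha_5,\beta_4,\ldots,\beta_{m-1},\alpha_1$, adding $p=\tfrac{n}{2}+1$ when the step involves a center and $q=\tfrac{n}{2}$ otherwise, and reads off the span from the last label $f(\alpha_1)=f(\beta_1)+(2m-3)q+2p$. Your labeling instead fixes the second star by the Lemma~\ref{lem0} pattern $0,n+1,2n+1,\ldots$ and inserts the first-star leaves into the half-integer slots $jn+\tfrac{n}{2}+1$ via a cyclic shift; one can check (e.g.\ for $m=5$) that the two assignments are genuinely different permutations of the same label multiset.

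What your route buys is a closed-form description amenable to a clean three-case verification of the radio condition for \emph{all} pairs, which you carry out; the paper's sequential argument, by contrast, only explicitly checks consecutive pairs in the ordering and leaves the remaining pairwise conditions implicit. What the paper's route buys is that the construction is self-evidently motivated---each increment is exactly the minimum forced by the distance to the previous vertex---and it generalises smoothly to the chaining argument of Lemma~\ref{lem2}. Your closing remark on why $m>3$ is needed (so the cyclic shift avoids both forbidden offsets simultaneously) is a nice structural observation that the paper does not make explicit.
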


\begin{proof}
Let $V_{(i)}=\left\{u_{1,i}, u_{2,i}, \cdots, u_{m,i}\right\}$ and $V_{(t)}=\left\{u_{1,t}, u_{2,t}, \cdots, u_{m,t} \right\}$, where $t=i+\frac{n}{2}$. For $r \in [1,m]$, set $u_{r,i} \in V_{(i)}$ as $\alpha_r$ and $u_{r,t} \in V_{(t)}$ as $\beta_r$. From earlier remark, $d(\beta_r, \alpha_r) \in \left\{ \frac{n}{2}+1, \frac{n}{2}+2\right\}$ for $r \neq s$. Now, for every pair $\alpha_s,\beta_r$, where $\alpha_s \in V_{(i)}$, and $\beta_r \in V_{(t)}$, let $r \neq s$ except otherwise stated. Let $\alpha_1$ and $\beta_1$ be the respective centers of the stars $S_{m(i)}$ and $S_{m(t)}$ induced by $V_{(i)}$ and $V_{(t)}$ and let the radio label on $\beta_1$ be $f(\beta_1)$ such that $f(\beta_1)=${\rm{min}}$ \left\{f(\beta_i): 1 \leq i \leq m\right\}$. Since $\beta_1$ is the center of $S_{m(t)}$, then given $\alpha_2 \in V_{(i)}$, $d(\beta_1,\alpha_2)=\frac{n}{2}+1$. Now set $p=diam(G_{m,n})+1-d(\beta_1,\alpha_r)$, $r \neq 1$. Hence, $p=n+2-(\frac{n}{2}+1)=\frac{n}{2}+1$. Suppose that $\alpha_j \in V_{(i)}$ and $\beta_k \in V_{(t)}$, such that $1 \neq j \neq k \neq 1$. Then, $d (\alpha_j, \beta_k)= \frac{n}{2}+2$. So we set $q=diam(G_{m,n})+1-d(\alpha_j,\beta_k)=\frac{n}{2}$. For $f(\beta_1)$ and some $\alpha_2 \in V_{(i)}$, $f(\alpha_2) = f(\beta_1)+p$. Also, for $\beta_3 \in V_{(t)}$, $f(\beta_3) = f(\alpha_2)+q = f(\beta_1)+p+q$ and $f(\alpha_4) = f(\beta_1)+2q+p$. We continue to label the vertices on both $V_{(i)}$ and $V_{(t)}$ alternatively based on the last value attained. Therefore, for $m$ odd,

\begin{eqnarray}
 f(\beta_m)  &=& f(\alpha_{m-1})+ \frac{n}{2} \nonumber\\
 &=& f(\beta_1)+(m-2)q+p. \nonumber
\end{eqnarray}
It can be seen that there does not exist $\alpha_d \in V_{(i)}$, such that $d > m$. So, we reverse the order of labeling, such that for $\beta_m, \alpha_3$, $f(\alpha_3) = f(\beta_m) +q = f(\beta_1)+(m-2)q+2p$. Also, for the pair $\alpha_3, \beta_2$, $f(\beta_2) = f(\beta_1)+(m-2)q+2q+p$. This continues until we reach the pair $\alpha_m$, $\beta_{m-1}$, and obtain
\begin{eqnarray}
f(\alpha_{m-1}) &=& f(\beta_1)+(2m-3)q+p. \nonumber
\end{eqnarray}
Finally, we consider the pair $\beta_{m-1}$ and $\alpha_1$. Since $\alpha_1$ is the center of $S_{(i)}$, then $d(\alpha_1,\beta_{m-1})=\frac{n}{2}+1$ and hence,
\begin{eqnarray}
f(\alpha_1) &=& f(\alpha_{m-1})+p \nonumber \\
&=& f(\beta_1) +(2m-3)q+2p \nonumber \\
 &=& f(\beta_1)+mn-\frac{n}{2}+2. \nonumber
\end{eqnarray}
Hence, $rn(G(i)) \leq f(v_1) +mn-\frac{n}{2} +2 $, where $m$ is odd and $n$ even.
\end{proof}
Next we directly apply Theorem \ref{thm1}.
\begin{lemma} \label{lem2}
Let $\bar{G}(i)$ be induced by $\left\{S_{m(i)}, S_{m(i+\frac{n}{2})}, \gamma_1\right\}$, where $\gamma_1$ is the center of star $S_{m(i+\frac{n}{2}+1)}$, induced by $V_{(i+\frac{n}{2}+1)}$. Then, $f(\gamma_1) \leq f(\beta_1)+mn+3$.
\end{lemma}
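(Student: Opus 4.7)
The plan is to invoke the labeling constructed in Theorem \ref{thm1} on $G(i)$ and then extend it by the single vertex $\gamma_1$. Under that labeling the center $\alpha_1$ of $S_{m(i)}$ carries the maximum label $f(\alpha_1) = f(\beta_1) + mn - \frac{n}{2} + 2$, while the center $\beta_1$ of $S_{m(i+n/2)}$ carries the minimum label $f(\beta_1)$. Nothing else has to be relabeled.

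Since $\gamma_1$ is the center of $S_{m(i+n/2+1)}$ it lies on the spine of centers of $G_{m,n}$, so I would first record the distances
\[
d(\alpha_1,\gamma_1) = \tfrac{n}{2}+1,\qquad d(\beta_1,\gamma_1) = 1,\qquad d(\alpha_r,\gamma_1) = \tfrac{n}{2}+2,\qquad d(\beta_r,\gamma_1) = 2
\]
for every $r \neq 1$. Using $\textmd{diam}(G_{m,n}) = n+1$ (Remark \ref{rem01}), the radio constraint against $\alpha_1$ forces $f(\gamma_1) \geq f(\alpha_1) + (n+2) - (\tfrac{n}{2}+1) = f(\alpha_1) + \tfrac{n}{2}+1$, and I would extend the labeling by declaring $f(\gamma_1)$ equal to this lower bound. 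Substituting the formula for $f(\alpha_1)$ gives precisely $f(\gamma_1) = f(\beta_1) + mn + 3$, which matches the claim.

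It then remains to confirm that this single choice satisfies the radio condition $f(\gamma_1) - f(w) \geq (n+2) - d(\gamma_1,w)$ against every other vertex $w$ of $\bar{G}(i)$. The required gaps are $n+1$ for $\beta_1$, $n$ for each $\beta_r$ with $r\neq 1$, $\tfrac{n}{2}+1$ for $\alpha_1$ (met with equality by construction), and $\tfrac{n}{2}$ for each $\alpha_r$ with $r\neq 1$. The constraint against $\beta_1$ is immediate since $mn+3 \geq n+1$. For the remaining cases I would read off the labels from the zig-zag recurrence in Theorem \ref{thm1}: the two candidates for the largest labels in $V_{(i)}\setminus\{\alpha_1\}$ and $V_{(i+n/2)}\setminus\{\beta_1\}$ are $f(\alpha_{m-1}) = f(\beta_1) + (2m-3)q + p = f(\beta_1) + (m-1)n + 1$ and $f(\beta_{m-1}) = f(\alpha_1) - p = f(\beta_1) + mn - n + 1$, so the gaps from $f(\gamma_1)$ to these are each $n+2$, safely above the required $\tfrac{n}{2}$ and $n$ respectively.

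The main obstacle is this final verification step, because one must identify which $\alpha_r$ and which $\beta_r$ carry the second-largest labels from the alternating scheme of Theorem \ref{thm1}, and this requires unwinding both the forward and the reverse phases of that construction. Once those identifications are in hand the remaining inequalities collapse to one-line arithmetic, and the binding constraint, namely the one against $\alpha_1$, is met with equality, so the extension yields the bound $f(\gamma_1) \leq f(\beta_1) + mn + 3$ asserted by the lemma.
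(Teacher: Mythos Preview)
Your proposal is correct and follows essentially the same approach as the paper: take the labeling from Theorem~\ref{thm1}, observe that $d(\alpha_1,\gamma_1)=\tfrac{n}{2}+1$, and set $f(\gamma_1)=f(\alpha_1)+p=f(\beta_1)+mn+3$. The paper's proof stops there, whereas you additionally check the radio condition for $\gamma_1$ against the remaining vertices of $\bar G(i)$; this extra verification is not in the paper but is a reasonable thing to include.
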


\begin{proof}
For $\alpha_1$ and $\beta_1$ centers of stars $S_{(i)}$ and $S_{(i+\frac{n}{2})}$ respectively, let $f(\alpha_1) = f(\beta_1)+mn-\frac{n}{2}+2$, as established in Theorem \ref{thm1}. Then, $d(\alpha_1,\gamma_1)=\frac{n}{2}+1$. Therefore,
\begin{eqnarray}
f(\gamma_1) &=& f(\alpha_1)+p \nonumber \\
&=& f(\beta_1)+mn+3. \nonumber
\end{eqnarray}

\end{proof}
Now, for $\beta_1$, the center of $S_{m(1+\frac{n}{2})}$, induced by $V_{(1+\frac{n}{2})}$. By setting $f(\beta_1)=0$, we establish an upper bound for the radio number of a stacked-book graph $G_{m,n}$ in the next results.
\begin{theorem} \label{thm2}
For $G_{m,n}$, $m$ odd and $n$ even, $rn(G_{m,n}) \leq \frac{mn^2}{2}+n-1$.
\end{theorem}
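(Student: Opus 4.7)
The plan is to construct an explicit radio labeling $f$ on $G_{m,n}$ attaining span $\tfrac{mn^2}{2}+n-1$, and then to verify the radio condition. The natural construction exploits the block decomposition into the $\tfrac{n}{2}$ subgraphs $G(1),G(2),\dots,G(n/2)$, whose vertex sets partition $V(G_{m,n})$. I would process the blocks in order, labeling each $G(i)$ internally by the scheme of Theorem \ref{thm1} and bridging consecutive blocks by Lemma \ref{lem2}.

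Concretely, let $\beta_1^{(i)}$ denote the center of $S_{m(i+n/2)}$ and $\alpha_1^{(i)}$ the center of $S_{m(i)}$. Set $f(\beta_1^{(1)})=0$ and run the Theorem \ref{thm1} labeling inside $G(1)$, which terminates at $\alpha_1^{(1)}$ with value $mn-\tfrac{n}{2}+2$. By Lemma \ref{lem2}, the assignment $f(\beta_1^{(2)})=mn+3$ is admissible; restart the Theorem \ref{thm1} scheme inside $G(2)$ with this base value, and iterate. The starting label of block $i$ is then $L_i=(i-1)(mn+3)$, and the largest label, attained at $\alpha_1^{(n/2)}$, equals $L_{n/2}+mn-\tfrac{n}{2}+2=\tfrac{mn^2}{2}+n-1$, matching the claim.

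Verification splits into three types of pair. Pairs inside a common block $G(i)$ are handled by Theorem \ref{thm1}. The transition pair $(\alpha_1^{(i)},\beta_1^{(i+1)})$ is handled by Lemma \ref{lem2}. For cross-block pairs $u\in V(G(i))$, $v\in V(G(j))$ with $j\geq i+2$, the minimum label gap $(j-i)(mn+3)-mn+\tfrac{n}{2}-2$ is at least $mn+\tfrac{n}{2}+4$, which dominates $n+2-d(u,v)\leq n+1$, so the radio condition is automatic.

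The main obstacle is the case $j=i+1$. Here the label gap can drop to $\tfrac{n}{2}+1$, while the graph distance between vertices of $G(i)$ and $G(i+1)$ can be as small as $1$ (for example, $u_{k,i+n/2}$ and $u_{k,i+1+n/2}$ lie on the same spoke of adjacent stars). I would enumerate cases by which of the four horizontal levels $\{i,i+n/2,i+1,i+1+n/2\}$ contains each of $u,v$ and by whether each is a center, using the fact that within each block the Theorem \ref{thm1} ordering spaces consecutive labels by at least $\tfrac{n}{2}$. Checking $|f(u)-f(v)|\geq n+2-d(u,v)$ in each sub-case is where the arithmetic lives, and the tightest sub-cases are the crux of the theorem.
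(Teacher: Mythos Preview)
Your approach is essentially the same as the paper's: decompose $G_{m,n}$ into the $\tfrac{n}{2}$ blocks $G(1),\dots,G(n/2)$, label each block internally via the Theorem~\ref{thm1} scheme, bridge consecutive blocks via Lemma~\ref{lem2}, and compute the terminal label as $(\tfrac{n}{2}-1)(mn+3)+mn-\tfrac{n}{2}+2=\tfrac{mn^2}{2}+n-1$. The paper's proof stops at this span computation and does not explicitly verify the radio condition for arbitrary cross-block pairs; you go further by isolating the $j=i+1$ case as the only nontrivial one and outlining the case analysis needed there, which is a genuine addition rather than a departure.
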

\begin{proof}
Let $\left\{v_{1(1)},v_{2(1)},v_{3(1)}, \cdots, v_{n(1)}\right\}$ be the set of the respective centers of stars $S_{m(1)}$, $S_{m(2)}$, $S_{m(3)}$, $\cdots$, $S_{m(n)}$ in $G_{m,n}$. Also, suppose that $f(v_{\frac{n}{2}+1(1)})=0$. From the Lemma \ref{lem2} $f(v_{\frac{n}{2}+2(1)})=mn+3$; $f(v_{\frac{n}{2}+3(1)})=2(mn+3)$ and so on. In the end, $f(v_{n(1)})=(\frac{n}{2}-1)(mn+3)$. Also, let $v_{n-\frac{n}{2}(1)}=v_{\frac{n}{2}(1)}$ be the center of $S_{m(\frac{n}{2})} \subset G_{m,n}$ and let $S_{m(\frac{n}{2})}, S_{m(n)}$ induce the graph $G({\frac{n}{2}}) \subset G_{m,n}$. By Theorem \ref{thm1},
\begin{eqnarray}
rn\left(G\left(\frac{n}{2}\right)\right) &\leq& f(v_{n(1)})+mn-\frac{n}{2}+2 \nonumber \\
&\leq& \frac{mn^2}{2}+n-1 \nonumber.
\end{eqnarray}
\end{proof}
\begin{theorem} \label{thm03}
Let $m,n$ be even. Then $rn(G_{m,n}) \leq \frac{mn^2}{2}+n-1$.
\end{theorem}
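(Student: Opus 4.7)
The plan is to reduce Theorem \ref{thm03} to the odd-$m$ argument by establishing an even-$m$ analog of Theorem \ref{thm1}, and then invoking Lemma \ref{lem2} and the chaining scheme of Theorem \ref{thm2} without change. The parity of $m$ entered Theorem \ref{thm1} only through the choice of Hamiltonian labeling path on the $2m$ vertices of $G(i)$, so this is the sole ingredient that needs to be replaced.

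For even $m\geq 4$, I would label the vertices of $G(i)$ in the order
\[
\beta_1,\alpha_2,\beta_3,\alpha_4,\ldots,\beta_{m-1},\alpha_m,\beta_2,\alpha_{m-1},\beta_4,\alpha_{m-3},\ldots,\alpha_3,\beta_m,\alpha_1,
\]
so that the phase-1 zigzag now terminates at $\alpha_m$ (instead of $\beta_m$ as in the odd case), and phase 2 interleaves the even $\beta$-indices $2,4,\ldots,m$ in ascending order with the odd $\alpha$-indices $m-1,m-3,\ldots,3$ in descending order, ending at the center $\alpha_1$. Every interior transition connects a vertex of $V_{(i)}$ with a vertex of $V_{(i+n/2)}$ whose $S_m$-indices are distinct and neither of which is a center, so $d=\frac{n}{2}+2$ and the radio condition forces an increment of exactly $q:=\frac{n}{2}$. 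The only transitions involving a central vertex are $\beta_1\to\alpha_2$ at the start and $\beta_m\to\alpha_1$ at the end, each contributing $p:=\frac{n}{2}+1$. Summing the $2m-1$ increments yields
\[
f(\alpha_1) - f(\beta_1) \;=\; 2p + (2m-3)q \;=\; mn - \tfrac{n}{2} + 2,
\]
which matches the bound of Theorem \ref{thm1}.

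Given this even-$m$ analog of Theorem \ref{thm1}, the proof of Lemma \ref{lem2} goes through without modification (it uses only the value $f(\alpha_1)=f(\beta_1)+mn-\frac{n}{2}+2$ together with the distance $d(\alpha_1,\gamma_1)=\frac{n}{2}+1$), and the concatenation argument of Theorem \ref{thm2} then delivers
\[
rn(G_{m,n}) \;\leq\; \left(\tfrac{n}{2}-1\right)(mn+3) + \left(mn - \tfrac{n}{2} + 2\right) \;=\; \tfrac{mn^2}{2}+n-1
\]
by labeling the centers of $S_{m(n/2+1)},\ldots,S_{m(n)}$ via Lemma \ref{lem2} and applying the extended Theorem \ref{thm1} to the final subgraph $G(\tfrac{n}{2})$. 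The main step to be careful about is the index check within phase 2: one must confirm that each consecutive pair of indices is distinct, which is immediate since the pattern alternates an even index with an odd one after the bridge $\alpha_m\to\beta_2$ (and $m\neq 2$ because $m\geq 4$). No other obstacle is anticipated.
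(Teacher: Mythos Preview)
Your proposal is correct and follows exactly the route the paper intends: the paper's own proof of Theorem \ref{thm03} is just the one-line remark that the argument of Theorem \ref{thm1}, Lemma \ref{lem2}, and Theorem \ref{thm2} carries over, and you have supplied precisely the missing even-$m$ ingredient (an alternating $\beta/\alpha$ ordering of the $2m$ vertices with the two centers at the ends and no consecutive repeated index, giving $2p+(2m-3)q=mn-\tfrac{n}{2}+2$). Your specific ordering differs from the one implicit in the paper's Figure~1 for $G_{4,6}$, but any such ordering works and the remainder of the argument via Lemma \ref{lem2} and Theorem \ref{thm2} is identical.
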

\begin{proof}
The proof follows similar argument and technique as in Theorem \ref{thm1}, Lemma \ref{lem2} and Theorem \ref{thm2}.
\end{proof}


\begin{center}
\pgfdeclarelayer{nodelayer}
\pgfdeclarelayer{edgelayer}
\pgfsetlayers{nodelayer,edgelayer}
\begin{tikzpicture}
	\begin{pgfonlayer}{nodelayer}

		\node [minimum size=0cm,draw, circle] (0) at (1.5,0) {\tiny 16};
		\node [minimum size=0cm,draw, circle] (1) at (3,0) {\tiny 43};
		\node [minimum size=0cm,draw, circle] (2) at (4.5,0) {\tiny 70};
		\node [minimum size=0cm,draw, circle] (3) at (6,0) {\tiny 7};
	  \node [minimum size=0cm,draw, circle] (4) at (7.5,0) {\tiny 34};
		\node [minimum size=0cm,draw, circle] (4a) at (9.0,0) {\tiny 61};

		\node [minimum size=0cm,draw, circle] (5) at (0.5,1) {\tiny 10};
		\node [minimum size=0cm,draw, circle] (6) at (2.0,1) {\tiny 37};
		\node [minimum size=0cm,draw, circle] (7) at (3.5,1) {\tiny 64};
		\node [minimum size=0cm,draw, circle] (8) at (5,1) {\tiny 19};
		\node [minimum size=0cm,draw, circle] (9) at (6.5,1) {\tiny 46};
		\node [minimum size=0cm,draw, circle] (9a) at (8,1) {\tiny 73};
		
		\node [minimum size=0cm,draw, circle] (10) at (1,2) {\tiny 23};
		\node [minimum size=0cm,draw, circle] (11) at (2.5,2) {\tiny 50};
		\node [minimum size=0cm,draw, circle] (12) at (4,2) {\tiny 77};
	  \node [minimum size=0cm,draw, circle] (13) at (5.5,2) {\tiny 0};
		\node [minimum size=0cm,draw, circle] (14) at (7,2) {\tiny 27};
		\node [minimum size=0cm,draw, circle] (14a) at (8.5,2) {\tiny 54};

		\node [minimum size=0cm,draw, circle] (15) at (1,3.5) {\tiny 4};
		\node [minimum size=0cm,draw, circle] (16) at (2.5,3.5) {\tiny  31};
		\node [minimum size=0cm,draw, circle] (17) at (4,3.5) {\tiny 58};
		\node [minimum size=0cm,draw, circle] (18) at (5.5,3.5) {\tiny 13};
		\node [minimum size=0cm,draw, circle] (19) at (7,3.5) {\tiny 40};
		\node [minimum size=0cm,draw, circle] (19a) at (8.5,3.5) {\tiny 67};
		
	  \node [minimum size=0] (20) at (5,-1) {\small Figure 1. A $G_{4,6}$ graph with $rn(G_{4,6}) \leq 77$};
		
	\end{pgfonlayer}
	\begin{pgfonlayer}{edgelayer}
		\draw [thin=1.00] (0) to (1);
		\draw [thin=1.00] (1) to (2);
		\draw [thin=1.00] (2) to (3);
		\draw [thin=1.00] (3) to (4);
		\draw [thin=1.00] (4) to (4a);
		
		\draw [thin=1.00] (5) to (6);
		\draw [thin=1.00] (6) to (7);
    \draw [thin=1.00] (7) to (8);
    \draw [thin=1.00] (8) to (9);
		\draw [thin=1.00] (9) to (9a);

		\draw [thin=1.00] (10) to (11);
		\draw [thin=1.00] (11) to (12);
		\draw [thin=1.00] (12) to (13);
		\draw [thin=1.00] (13) to (14);
		\draw [thin=1.00] (14) to (14a);

		\draw [thin=1.00] (15) to (16);
    \draw [thin=1.00] (16) to (17);
    \draw [thin=1.00] (17) to (18);
		\draw [thin=1.00] (18) to (19);
		\draw [thin=1.00] (19) to (19a);
		
		\draw [thin=1.00] (10) to (5);
		\draw [thin=1.00] (10) to (15);
		\draw [thin=1.00] (10) to (0);
		
		\draw [thin=1.00] (11) to (16);
    \draw [thin=1.00] (11) to (6);
    \draw [thin=1.00] (11) to (1);
		
		\draw [thin=1.00] (12) to (17);
		\draw [thin=1.00] (12) to (7);
		\draw [thin=1.00] (12) to (2);
		
		\draw [thin=1.00] (13) to (18);
		\draw [thin=1.00] (13) to (8);
    \draw [thin=1.00] (13) to (3);
		
    \draw [thin=1.00] (14) to (19);
		\draw [thin=1.00] (14) to (9);
    \draw [thin=1.00] (14) to (4);
		
		\draw [thin=1.00] (14a) to (4a);
		\draw [thin=1.00] (14a) to (9a);
    \draw [thin=1.00] (14a) to (19a);

	\end{pgfonlayer}
\end{tikzpicture}

\end{center}

Theorems \ref{thm11}, \ref{thm2}, \ref{thm03}
establish the radio number of $G_{m,n}$, where $m \geq 4$ and $n$ is even, as recapped in the next theorem.
\begin{theorem}\label{thm0001}
Let $G_{m,n}$ be a stacked-book graph with $m \geq 4$ and $n$ even, then, $rn(G_{m,n})=\frac{mn^2}{2}+n-1$.
\end{theorem}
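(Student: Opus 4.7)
The plan is to obtain Theorem \ref{thm0001} as a direct synthesis of the three bounds already established in the paper. Since the statement is an equality, I would prove the two matching inequalities separately and then combine them.

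For the lower bound, I would simply invoke Theorem \ref{thm11}, which gives $rn(G_{m,n}) \geq \tfrac{mn^2}{2}+n-1$ for all $m \geq 3$ and $n$ even; in particular this covers every $m \geq 4$ with $n$ even. No further work is needed here, since the lower bound proof was set up by counting $\tfrac{n-1}{2}$ subgraphs of type $G^+(i)$ plus one terminal $G(\tfrac{n}{2})$ subgraph and adding their minimum label gaps.

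For the matching upper bound I would split on the parity of $m$. When $m$ is odd (so $m \geq 5$ under our hypothesis), Theorem \ref{thm2} directly gives $rn(G_{m,n}) \leq \tfrac{mn^2}{2}+n-1$ via the explicit alternating labeling scheme built star-by-star from a chosen center $\beta_1$ with $f(\beta_1)=0$. When $m$ is even, Theorem \ref{thm03} supplies the same upper bound by the analogous construction. Together these two cases exhaust $m \geq 4$, and in each the constructed labeling achieves span exactly $\tfrac{mn^2}{2}+n-1$.

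The only place where care is required is verifying that the two parity cases really cover all $m \geq 4$ (they do: odd $m \geq 5$ and even $m \geq 4$), and noting why $m=3$ is excluded here. The reason is that the labeling in Theorem \ref{thm1}, on which Theorem \ref{thm2} is built, requires $m > 3$ so that enough vertices exist to execute the alternating $\alpha/\beta$ assignment without collisions; the lower bound of Theorem \ref{thm11} still holds at $m=3$, but the matching upper bound is not obtained by these constructions, which is why $m=3$ is treated separately in the paper.

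Assembling these observations, the proof is a one-line argument: for $m \geq 4$ and $n$ even, Theorem \ref{thm11} combined with Theorem \ref{thm2} (for $m$ odd) or Theorem \ref{thm03} (for $m$ even) yields $rn(G_{m,n}) = \tfrac{mn^2}{2}+n-1$. The main obstacle, if any, is not in this final theorem but in the supporting Theorem \ref{thm03}, whose proof the authors only sketch as \emph{similar argument and technique} to Theorem \ref{thm1}, Lemma \ref{lem2}, and Theorem \ref{thm2}; a careful writer would want to redo that construction explicitly to check that parity changes in $m$ do not disturb the final pair $(\alpha_{m-1},\beta_{m-1})$ or the closing gap $p$ at the center $\alpha_1$.
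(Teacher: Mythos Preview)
Your proposal is correct and matches the paper's own treatment exactly: the paper states Theorem~\ref{thm0001} as an immediate recap of Theorems~\ref{thm11}, \ref{thm2}, and~\ref{thm03}, with no additional argument. Your parity split on $m$ and your remark about the sketched proof of Theorem~\ref{thm03} are both appropriate.
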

Next we consider the case where $m=3$. First we present a result that is equivalent to Theorem \ref{thm1} with respect to $m=3$.

\begin{theorem} \label{thm3}
Let $G_{3,n}$ be a stacked-book graph, where $n$ is even. Suppose that the pair $\left\{S_{3(i)}, S_{3(i+\frac{n}{2})}\right\}$ form a subgraph $G(i)$ of $G_{3,n}$. Then, $rn(G(i)) \leq f(u_1)+\frac{5n}{2}+3$, where $u_1$ is the center vertex of $S_{3(i+\frac{n}{2})}$.
\end{theorem}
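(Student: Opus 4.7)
The plan is to construct an explicit radio labeling on the six vertices of $G(i) = \{S_{3(i)}, S_{3(i+\frac{n}{2})}\}$ whose span is $\frac{5n}{2}+3$, adapting the alternating strategy of Theorem \ref{thm1} to the small value of $m$. Writing $\alpha_r = u_{r,i}$ and $\beta_r = u_{r,i+\frac{n}{2}}$, set $p = \frac{n}{2}+1$ for the minimum label gap forced between the two stars when exactly one endpoint is a center, and $q = \frac{n}{2}$ for the gap when neither endpoint is a center. By Lemma \ref{lem0} same-star gaps are at least $n+1$ when a center is involved and at least $n$ otherwise, and by Remark \ref{reme1} a cross-star column pair $(\alpha_r,\beta_r)$ forces a gap of at least $\frac{n}{2}+2$.

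With $u_1 = \beta_1$ and $f(\beta_1) = f(u_1)$, I would assign labels along the chain
\[
 \beta_1 \longrightarrow \alpha_2 \longrightarrow \beta_3 \longrightarrow \alpha_1 \longrightarrow \beta_2 \longrightarrow \alpha_3
\]
with consecutive gaps $p, q, p, p, q$. The five consecutive cross-star distances are $\frac{n}{2}+1, \frac{n}{2}+2, \frac{n}{2}+1, \frac{n}{2}+1, \frac{n}{2}+2$, so each consecutive constraint is tight by construction, and the terminal label is $f(\alpha_3) = f(u_1) + 3p + 2q = f(u_1) + \frac{5n}{2}+3$, matching the claimed bound. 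The verification then reduces to the non-consecutive pairs: the three column pairs $(\beta_1,\alpha_1), (\alpha_2,\beta_2), (\beta_3,\alpha_3)$ each lie three chain-positions apart and accumulate $2p+q = \frac{3n}{2}+2 \geq \frac{n}{2}+2$; the lone non-consecutive non-column cross-star pair $(\beta_1,\alpha_3)$ accumulates the full span $\frac{5n}{2}+3 \geq \frac{n}{2}+1$; and the same-star non-consecutive pairs accumulate at least $p+q = n+1$ (distance $1$, center involved) or at least $2p+2q \geq n$ (distance $2$), as required.

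The main obstacle is finding this ordering. A direct adaptation of Theorem \ref{thm1}, which would reverse direction after $\beta_3$ and next label $\alpha_3$, fails because $(\beta_3,\alpha_3)$ is a column pair and forces an extra $\frac{n}{2}+2$ jump, overshooting the target. In fact, once we insist that $\beta_1$ be the minimum and that the chain alternate between the two stars (to avoid in-star jumps of $n+1$ or $n$), a greedy case analysis forces the ordering above, up to the swap of indices $2$ and $3$. Threading the center $\alpha_1$ through the middle of the chain, rather than placing it at the end as in the $m\geq 5$ case, costs one extra $p-q = 1$ and accounts for the bound $\frac{5n}{2}+3$ exceeding the formal value $mn-\frac{n}{2}+2 = \frac{5n}{2}+2$ that Theorem \ref{thm1}'s formula would predict at $m=3$.
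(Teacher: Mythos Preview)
Your ordering $\beta_1 \to \alpha_2 \to \beta_3 \to \alpha_1 \to \beta_2 \to \alpha_3$ with consecutive gaps $p,q,p,p,q$ is exactly the paper's construction (in its notation $u_1 \to v_2 \to u_3 \to v_1 \to u_2 \to v_3$), and your verification of the non-consecutive constraints is in fact more complete than what the paper writes out. One small slip: the distance-$2$ same-star pair $(\beta_2,\beta_3)$ sits at chain positions $3$ and $5$ and therefore accumulates only $2p = n+2$, not $2p+2q$ as you state, though this still meets the required gap of $n$.
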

\begin{proof}
Let $V_{(i)}=\left\{v_1,v_2,v_3\right\}$ and $V_{(i+\frac{n}{2})}=\left\{u_1,u_2,u_3\right\}$ where $V_{(i)}$ and $V_{(i+\frac{n}{2})}$ are vertex sets of stars $S_{3(i)}$ and $S_{3(i+\frac{n}{2})}$ in $G_{3,n}$ respectively. Also, let $v_1$ and $u_i$ be the respective center vertices of $S_{3(i)}$ and $S_{3(i+\frac{n}{2})}$. From earlier remark, $d(v_1,u_1)=\frac{n}{2}+1$. Suppose that $f(u_1)$, the radio label of $u_1$ is the smallest possible radio label on $G(i)$, then,
\begin{eqnarray}
f(v_2) &=& f(v_1)+diam(G_{3,n})+1-d(v_1,u_1) \nonumber \\
&=& f(u_i)+ \frac{n}{2}+1. \nonumber
\end{eqnarray}
For $v_2,u_3$ $d(v_2,u_3)=\frac{n}{2}+2$,
\begin{eqnarray}
f(u_3) &=&f(u_1)+n+1 \nonumber
\end{eqnarray}
For $u_3,v_1$, $d(u_3,v_1)= \frac{n}{2}+1$,
\begin{eqnarray}
f(v_1) &=& f(u_1)+\frac{3n}{2}+2. \nonumber
\end{eqnarray}
For $v_1,u_2$, $d(v_1,u_2)=\frac{n}{2}+1$ and thus,
\begin{eqnarray}
f(u_2) &=& f(u_1)+\frac{3n}{2}+2+n+2-\left(\frac{n}{2}+1\right) \nonumber \\
&=& f(u_1)+2n+3. \nonumber
\end{eqnarray}
And finally, for the pair $v_3,u_2$, $d(v_3,u_3)=\frac{n}{2}+2$ and
\begin{eqnarray}
f(u_3) &=& f(u_1)+\frac{5n}{2}+3. \nonumber
\end{eqnarray}
Hence, $rn(G(i)) \leq f(u_1)+\frac{5n}{2}+3$.

\end{proof}
Next, we obtain the following result.
\begin{lemma}
Let $\kappa_1$ be the center of star $S_{3(i+\frac{n}{2})+1} \subseteq G_{3,n}$ and let $\bar{H}(1)$ be a subgraph of $G_{(3,m)}$ induced by $\left\{S_{3(i)}, S_{3(i+\frac{n}{2}), \kappa_1}\right\}$. Then $f(\kappa_1) \leq 3n+1$.
\end{lemma}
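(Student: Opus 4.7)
The plan is to exhibit an explicit radio labeling of the seven-vertex subgraph $\bar{H}(1)$ achieving $f(\kappa_1) - f(u_1) = 3n+1$, where $u_1$ denotes the center of $S_{3(i+\frac{n}{2})}$ and $f(u_1)=0$ is taken as the minimum, as in Lemma \ref{lem2}. The structural motivation is the following. Within $\bar{H}(1)$ the four vertices $\{u_1, u_2, u_3, \kappa_1\}$ sit in the two consecutive stars $S_{3(i+\frac{n}{2})}$ and $S_{3(i+\frac{n}{2}+1)}$: the center $u_1$ is at distance $1$ from each of the other three, and $u_2, u_3, \kappa_1$ are pairwise at distance $2$. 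The radio labeling condition therefore forces pairwise separations of at least $n+1$ on the three center--leaf pairs and at least $n$ on the three leaf--leaf pairs, so the minimum span over this four-vertex block is exactly $3n+1$, attained by the arrangement $\{0, n+1, 2n+1, 3n+1\}$ with $f(\kappa_1) = 3n+1$ at the top.

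Concretely, I would set $f(u_1)=0$, $f(u_2)=n+1$, $f(u_3)=2n+1$, $f(\kappa_1)=3n+1$, and then extend to the three vertices of $S_{3(i)}$ by $f(v_3) = \frac{n}{2}+1$, $f(v_2) = \frac{5n}{2}+1$, and $f(v_1) = \frac{7n}{2}+2$. The remaining fifteen pairwise radio conditions are then verified one by one, using the distance formulas of Remark \ref{reme1} together with $d(\kappa_1, v_1) = \frac{n}{2}+1$ and $d(\kappa_1, v_j) = \frac{n}{2}+2$ for $j \in \{2,3\}$. Several of these are tight (for instance $|f(v_1)-f(\kappa_1)| = \frac{n}{2}+1$, $|f(v_1)-f(v_2)| = n+1$, and $|f(v_3)-f(u_1)| = \frac{n}{2}+1$), but all hold, so $f$ is a valid radio labeling with $f(\kappa_1) = 3n+1$, giving the claimed bound.

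The main obstacle is that this labeling differs from the one produced by Theorem \ref{thm3}, in which the maximum of $G(i)$ sits on $v_3$ at $\frac{5n}{2}+3$; if one simply appended $\kappa_1$ to that labeling, the conditions $|f(v_3)-f(\kappa_1)| \geq \frac{n}{2}$ and $|f(u_2)-f(\kappa_1)| \geq n$ would force $f(\kappa_1) \geq 3n+3$, so the short ``append-via-the-distance-to-$\alpha_1$'' argument from Lemma \ref{lem2} does not port directly to $m=3$. Instead the re-labeling above pushes the maximum of $G(i)$ up to $v_1$ (at $\frac{7n}{2}+2$), freeing the slot $3n+1$ for $\kappa_1$, and the real technical content is the compatibility check that this rearrangement still satisfies every radio condition inside $G(i)$ simultaneously with the new constraints introduced by $\kappa_1$.
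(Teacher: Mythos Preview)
Your construction is correct and proves the stated inequality, but it takes a genuinely different route from the paper. The paper does \emph{not} relabel $G(i)$: it simply appends $\kappa_1$ to the labeling already produced in Theorem~\ref{thm3}, using that the maximum there is $f(v_3)=f(u_1)+\tfrac{5n}{2}+3$ (the paper writes $u_3$, a typo) together with $d(v_3,\kappa_1)=\tfrac{n}{2}+2$, and concludes $f(\kappa_1)=f(u_1)+3n+3$. In other words, the paper's own proof yields $3n+3$, not $3n+1$; the ``$3n+1$'' in the lemma statement is evidently a misprint, and indeed $3n+3$ is precisely the step size that, iterated $\tfrac{n}{2}-1$ times and capped by $\tfrac{5n}{2}+3$, produces the bound $\tfrac{3n^2}{2}+n$ in Theorem~\ref{thm4}.

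You noticed exactly this — that the append argument forces $f(\kappa_1)\ge 3n+3$ — and then redesigned the labeling of $G(i)$ so that $f(\kappa_1)=3n+1$ becomes feasible. The verification is clean and all twenty-one pairwise conditions do hold. The trade-off, which you flag yourself, is that your maximum now sits on $v_1$ at $\tfrac{7n}{2}+2>3n+1$, so $\kappa_1$ is no longer the top label of $\bar H(1)$. Consequently your labeling cannot be iterated block-by-block in the manner of Theorem~\ref{thm2}/\ref{thm4}: the whole point of the paper's lemma is that $\kappa_1$ becomes the new minimum for $G(i+1)$, which requires every label already placed to lie below $f(\kappa_1)$. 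So your argument establishes the literal inequality but does not feed into the upper-bound computation, whereas the paper's cruder $3n+3$ does. If the goal were to sharpen Theorem~\ref{thm4}, one would need a labeling in which $\kappa_1$ is simultaneously the maximum of $\bar H(1)$ \emph{and} below $3n+3$; your construction shows this is subtler than it looks for $m=3$.
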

\begin{proof}
The vertex with the maximum value of radio label in Theorem \ref{thm3} is $u_3$. Let us adopt this, with $f(u_3)=f(u_1)+\frac{5n}{2}+3$. Now, $d(u_3,\kappa_1)=\frac{n}{2}+2$. Therefore,
\begin{eqnarray}
f(\kappa_1) &=& f(u_1)+3n+3. \nonumber
\end{eqnarray}
\end{proof}
In the final result here, we set $f(u_1)=0,$ for $u_i$, the center of star $S_{3(1+\frac{n}{2})}$.
\begin{theorem} \label{thm4}
Let $n$ be an even positive integer. Then, $rn(G_{3,m}) \leq \frac{3n^2}{2}+n$.
\end{theorem}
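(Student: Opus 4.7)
The plan is to mimic the argument used for Theorem \ref{thm2}, replacing the bounds valid for $m\geq 4$ with their $m=3$ counterparts obtained in Theorem \ref{thm3} and the preceding lemma. The key ingredients are (i) the increment of $3n+3$ between the labels of the centers of two consecutive stars whose indices differ by $\tfrac{n}{2}+1$, and (ii) the span of at most $\tfrac{5n}{2}+3$ needed to finish a single $G(i)$ subgraph once its ``starting'' center has been assigned.

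Concretely, I would first set $f(u_1)=0$ at the center $u_1$ of $S_{3(1+\frac{n}{2})}$ and then propagate along the centers $v_{\frac{n}{2}+1(1)}, v_{\frac{n}{2}+2(1)}, \ldots, v_{n(1)}$ of the stars $S_{3(\frac{n}{2}+1)}, \ldots, S_{3(n)}$. Each step uses the preceding lemma applied to the subgraph $\bar{H}(i)$, so the label of the center of $S_{3(n)}$ satisfies
\begin{equation*}
f(v_{n(1)}) \leq \left(\tfrac{n}{2}-1\right)(3n+3).
\end{equation*}
Next, I would invoke Theorem \ref{thm3} on the subgraph $G(\tfrac{n}{2})$ induced by $\{S_{3(\frac{n}{2})}, S_{3(n)}\}$, with $v_{n(1)}$ playing the role of the center vertex $u_1$ in that statement. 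This extends the labeling to the remaining vertices of $G_{3,n}$ with maximum value at most $f(v_{n(1)})+\tfrac{5n}{2}+3$. Summing,
\begin{equation*}
rn(G_{3,n}) \leq \left(\tfrac{n}{2}-1\right)(3n+3) + \tfrac{5n}{2}+3 = \tfrac{3n^2}{2}+n,
\end{equation*}
which is the required upper bound.

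The main obstacle is to confirm that the piecewise labels assembled from the two auxiliary results genuinely patch into a single valid radio labeling of the whole $G_{3,n}$, rather than only within each local window. For any two vertices that lie in non-overlapping pieces of the construction, the label gap is a multiple of $3n+3$, comfortably exceeding $n+2-d(u,v)\leq n+1$, so the radio condition holds automatically. The delicate cases are vertices straddling two consecutive pieces, but these overlaps have already been incorporated into the statements of Theorem \ref{thm3} and the intermediate lemma, provided one fixes a consistent orientation (left-to-right) for the propagation so that ``starting'' and ``ending'' centers match up. Once this global check is carried out, the arithmetic above delivers the claimed bound $\tfrac{3n^2}{2}+n$.
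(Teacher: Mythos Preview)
Your proposal is correct and follows exactly the approach the paper intends: the paper's own proof of this theorem is the single sentence ``Proof follows similar technique adopted in Theorem \ref{thm03},'' which in turn points back to Theorem \ref{thm1}, Lemma \ref{lem2}, and Theorem \ref{thm2}. You have simply written out that technique with the $m=3$ constants from Theorem \ref{thm3} and the preceding lemma (using the increment $3n+3$ from its proof), and your arithmetic $\left(\tfrac{n}{2}-1\right)(3n+3)+\tfrac{5n}{2}+3=\tfrac{3n^2}{2}+n$ is correct.
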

\begin{proof}
Proof follows similar technique adopted in Theorem \ref{thm03}.
\end{proof}

Figure 2 is a radio numbering for $G_{3,6}$. It shows that $rn(G_{3,6})$ is not more than $60$.

\begin{center}
\pgfdeclarelayer{nodelayer}
\pgfdeclarelayer{edgelayer}
\pgfsetlayers{nodelayer,edgelayer}
\begin{tikzpicture}
	\begin{pgfonlayer}{nodelayer}

		\node [minimum size=0cm,draw, circle] (0) at (1,0) {\tiny 18};
		\node [minimum size=0cm,draw, circle] (1) at (2.5,0) {\tiny 39};
		\node [minimum size=0cm,draw, circle] (2) at (4,0) {\tiny 60};
		\node [minimum size=0cm,draw, circle] (3) at (5.5,0) {\tiny 7};
	  \node [minimum size=0cm,draw, circle] (4) at (7,0) {\tiny 28};
		\node [minimum size=0cm,draw, circle] (4a) at (8.5,0) {\tiny 59};

		\node [minimum size=0cm,draw, circle] (10) at (1,2) {\tiny 11};
		\node [minimum size=0cm,draw, circle] (11) at (2.5,2) {\tiny 32};
		\node [minimum size=0cm,draw, circle] (12) at (4,2) {\tiny 53};
	  \node [minimum size=0cm,draw, circle] (13) at (5.5,2) {\tiny 0};
		\node [minimum size=0cm,draw, circle] (14) at (7,2) {\tiny 21};
		\node [minimum size=0cm,draw, circle] (14a) at (8.5,2) {\tiny 42};

		\node [minimum size=0cm,draw, circle] (15) at (1,4) {\tiny 4};
		\node [minimum size=0cm,draw, circle] (16) at (2.5,4) {\tiny  25};
		\node [minimum size=0cm,draw, circle] (17) at (4,4) {\tiny 46};
		\node [minimum size=0cm,draw, circle] (18) at (5.5,4) {\tiny 15};
		\node [minimum size=0cm,draw, circle] (19) at (7,4) {\tiny 36};
		\node [minimum size=0cm,draw, circle] (19a) at (8.5,4) {\tiny 57};
		
	  \node [minimum size=0] (20) at (5,-1) {\small Figure 2. A $G_{3,6}$ graph with $rn(G_{3,6}) \leq 60$};
		
	\end{pgfonlayer}
	\begin{pgfonlayer}{edgelayer}
		\draw [thin=1.00] (0) to (1);
		\draw [thin=1.00] (1) to (2);
		\draw [thin=1.00] (2) to (3);
		\draw [thin=1.00] (3) to (4);
		\draw [thin=1.00] (4) to (4a);

		\draw [thin=1.00] (10) to (11);
		\draw [thin=1.00] (11) to (12);
		\draw [thin=1.00] (12) to (13);
		\draw [thin=1.00] (13) to (14);
		\draw [thin=1.00] (14) to (14a);

		\draw [thin=1.00] (15) to (16);
    \draw [thin=1.00] (16) to (17);
    \draw [thin=1.00] (17) to (18);
		\draw [thin=1.00] (18) to (19);
		\draw [thin=1.00] (19) to (19a);
		
		\draw [thin=1.00] (10) to (15);
		\draw [thin=1.00] (10) to (0);
		
		\draw [thin=1.00] (11) to (16);
    \draw [thin=1.00] (11) to (1);
		
		\draw [thin=1.00] (12) to (17);
		\draw [thin=1.00] (12) to (2);
		
		\draw [thin=1.00] (13) to (18);
    \draw [thin=1.00] (13) to (3);
		
    \draw [thin=1.00] (14) to (19);
    \draw [thin=1.00] (14) to (4);
		
		\draw [thin=1.00] (14a) to (4a);
    \draw [thin=1.00] (14a) to (19a);

	\end{pgfonlayer}
\end{tikzpicture}

\end{center}

\section{Conclusion}

It is noteworthy to look at some of the results in \cite{J1}. A $G_{3,n}$ is a $3 \times n$ grid. By \cite{J1}, it is seen that $rn(G_{3,6}) = 59$, which is better than the result in Figure 2 above by 1. But this is still a considerable improvement compared with a upper bound of $109$ suggested in \cite{AA1}. In establishing the upper bound for $G_{3,n}$, it is observed that the number of the pair $u,v \in V(G_{3,n})$ for which $d(u,v) = \frac{diam(G_{3,n})+1}{2}$ is more than the case where $d(u,v)=\frac{n}{2}$ in each of the segments of radio labeling of the stacked-graph. However, the reverse proves to be the case in $G_{m,n}$, $m \geq 4$.



\begin{thebibliography}{99}
\bibitem[1]{AA1} D. O. Ajayi and T. C. Adefokun {\it {On bounds of radio number of certain product graphs}}, J. Nigerian Math. Soc. 37(2) (2018) 71-78.

\bibitem[2]{BD1} D. Bantva, S Vaidya and S. Zhou {\it{Radio numbers of trees}}, Electron. Notes in Discrete Math. 48 (2015) 135-141.


\bibitem[3]{BD2} D. Bantva  {\it{Radio numbers of middle graph of paths}}, Electron. Notes in Discrete Math. 66 (2017) 93-100.



\bibitem[4]{CEHZ1} G. Chartrand, D. Erwin, F Harary and P. Zhang, {\it {Radio labelings of graphs}}, Bull. Inst. Combin. Appl. 33 (2001) 77-85.
	
		\bibitem[5]{CEZ1} G. Chartrand, D. Erwin and P. Zhang, {\it{A graph labeling problem suggested}} by FM Channel Restrictions, Bull. Inst. Combin. Appl. 43 (2005) 43-57.
		\bibitem[6]{Hale} W. K. Hale, {\it{Frequency assignment theory and applications}}, Proc. IEEE, 68 (1980) 1497 - 1514

		\bibitem[7]{J1} T-S Jiang, {\it{The radio number of grid graphs}}, arXiv:1401.658v1. 2014.
		
		\bibitem[8]{LX2} D.D.-F. Liu and M. Xie {\it{Radio number for square paths}}, Ars Combin. 90 (2009) 307-319.

\bibitem[9]{LX1} D.D.-F. Liu and M. Xie {\it{Radio number for square cycles}}, Congr. Numer. 169 (2004) 105-125.
		
		\bibitem[10]{LZ1} D. Liu and X. Zhu, {\it{Multilevel distance labelings for paths and cycles}}, SIAM J. Discrete Math. 19 (2005) 610-621.
		
		\bibitem[11]{NSS1} A. Naseem, K. Shabbir and H. Shaker {\it{The radio number of edge-joint graphs}}, ARS Comb. 139 (2018) 337-351.
		
		
		
		\bibitem[12]{SP1} L. Saha and P. Panigrahi, {\it{On the radio numbers of toroidal grid}}, Aust. Jour. Combin. 55 (2013) 273-288.
		



\end{thebibliography}
\end{document}